\newtheorem{lemma}{Lemma}
\newtheorem{theorem}{Theorem}
\newtheorem{corollary}{Corollary}
\newtheorem{proposition}{Proposition}
\title{Dynamics of roots of randomized derivative polynomials}
\author{Andr\'e Galligo and Joseph Najnudel}
\date{}
\begin{document}
\maketitle

\begin{abstract}
In this paper, we study the asymptotic macroscopic behavior of the root sets of iterated, randomized  derivatives of polynomials. 
The randomization depend on a parameter of inverse temperature $\beta \in (0, \infty]$, the case $\beta =  \infty$ corresponding to the situation where one considers the derivative of polynomials, without randomization. Our constructions can be connected to random matrix theory: in particular, as detailed in Section 2, for $\beta = 2$ and roots on the real line, we get the distribution of the eigenvalues of minors of unitarily invariant random matrices.  More detail on the connection between our setting and random matrix theory can be found, for example, in \cite{AssiotisNajnudel, BEY, BNR,Cuenca, KN,NV}. 
We prove that the asymptotic macroscopic behavior of the roots, i.e. the hydrodynamic limit, does not depend on $\beta$, and coincides with what we obtain for the non-randomized iterated derivatives, i.e. for $\beta = \infty$.  
Since recent results obtained for iterated derivations show  that the limiting dynamics is governed by a non-local and non-linear PDE, we can transfer this information to the macroscopic behavior of the  
randomized setting. Our proof is completely explicit and relies on the analysis of increments in a triangular bivariate Markov chain.

\end{abstract}

{\large
\section{Introduction}
Patterns of roots of a polynomial under differentiation as well as  comparison  between the asymptotic behaviors of the zero set of a polynomial  and the zero set of its derivatives have already been considered since the nineteenth century, but their study  has made progresses in the last decades notably with the works of B. Hanin \cite{ha1}.  Following a conjecture of Pemantle and Rivin \cite{Pemantle}, Kabluchko and Seidel \cite{Kab}, then O'Rourke and Williams \cite{ORou}, determined the asymptotic fluctuations of the critical point nearest to a given root. They provided a fine stochastic analysis of the local situation.

In this paper we restrict ourselves to the cases where all the roots of the polyomials are on the real line.
 
For $n \geq 2$, let us denote by $(\lambda_j)_{1 \leq j \leq n}$ the roots of a random polynomial $P_n$ such that  the corresponding empirical measure
$$ \mu_n := \frac {1}{n}\sum_{j=1}^{n} \delta_{\lambda_j}$$
tends to a limiting probability measure $\mu$ when $n$ tends to infinity, and for $1 \leq k \leq n-1$, let $\mu_{n}^k$ be the empirical measure attached to the  roots set of the $k$-th derivative of $P_n$.
In several interesting random contexts, for a fixed integer $k$,  $\mu_{n}^k$ also tends to the same measure $\mu$.  If $k=o(n)$  tends to infinity, this is true when the roots $\lambda_j$ are real (it is still an open question for complex roots, see \cite{MV24} and \cite{MVu} for partial results). However, if the differentiation is repeated $k=\lfloor n \tau \rfloor$ times, where $\tau \in (0,1)$ is viewed as a ``time" parameter, some  non-trivial macroscopic dynamics of root sets appears,
and for real roots, it has been proven that  the empirical measure $\mu_{n}^k$  converges to a well-defined measure $\mu_{[\tau]}$. 

\subsection{Some recent results on dynamics induced by derivation}

In 2019, Steinerberger \cite{Stein1} derived the following partial  differential equation to describe the asymptotic evolution of  root sets of random polynomials on the real axis, when their degree $n$ tends to infinity:
$$\partial_t u = \frac{1}{\pi}  \partial_x \left(\arctan \left(\frac{u}{Hu}\right)\right)$$
where $u$ is the (regular) limit density of the roots set and $Hu$ is the Hilbert transform of $u$.  He provided a  rather informal construction  of his PDE: following the classical interpretation, via logarithmic derivation, of a critical point of $P_n$ as an equilibrium of repulsion-attraction forces from the roots of $P_n$, he divided them into a uniform local near field and an averaged  far field  estimated via a Cauchy-Stieljes integral, hence a Hilbert transform of the density.

In the case when all roots are real, several autors developped  a connection between
repeated differentiation and free probability: $\mu_{[t]}$, up to rescaling, coincides with the free additive self-convolution power
and satisfy the same PDE, which appeared in
a paper of Shlyakhtenko and Tao \cite{tao}. This property has been rigorously proven in \cite{HKab21}.
 Another  proof due to  Arizmendi, Garza-Vargas and Perales [\cite{AVP},
Theorem 3.7], uses finite free probability, a subject developed in the papers of Marcus \cite{Marcus},
Marcus, Spielman, Srivastava \cite{MSS}, Gorin and Marcus \cite{Gorin-Marcus}. In the proof of \cite{AVP}, 
the $k$-th derivative of $P_n$ is essentially identified with the finite free multiplicative convolution of $P_n$ and the polynomial $x^k (1-x)^{n-k}$ ; then, thanks to their  Theorem 1.4, finite free probability converges to the usual free probability, and  the considered empirical distribution  converges to the free multiplicative convolution of $\mu$ and $ t\delta_0 +(1-t)\delta_1$. 

In the case when all roots are on the unit circle, an analogous behavior happens. The articles
 \cite{Gra,Kis, HKab21,Alazard} successively offered detailed analysis providing rigorous proof of crystallization under repeated differentiation. They show global regularity and exponential in time convergence to uniform density. In \cite{Kis} the global in time control follows from the analysis of the propagation of errors equation, with  nonlinear fractional diffusion inspired by similar developments in the modelisation of collective motions.  In  \cite{HKab21}, an ''explicit`` strategy of proof relies on complex analysis. Then, Kabluchko in \cite{Kab21} completed this analysis by considering a natural isomorphism between trigonometric polynomials having all their roots on the unit circle and polynomials with complex coefficients $a_j$ such that $a_{n-j}=\overline{a_j}$  for $0 \leq j \leq n$. 
A recent paper of  Arizmendi, Fujie and Ueda \cite{AFU} gives new proofs of the above cited
results by Kabluchko, using purely combinatorial tools. Their identities on sums over partitions provide results on limit theorems for finite free convolutions and their connection to free probability theory. 

\subsection{Eigenvalues of minors of unitarily invariant matrices}
During the last decades, the point process limits of the random matrix spectra were described via the limiting
joint densities, usually relying on some algebraic structure of the finite models: see the
monographs by Mehta \cite{Mehta}, Anderson, Guionnet and Zeitouni \cite{AGZ09} and Forrester \cite{Forrester}
for an overview of the classical results.
Dumitriu and Edelman \cite{DE02} constructed tridiagonal random matrix models with 
spectrum distributed as beta ensembles, one parameter extensions of classical random matrix
models. Edelman and Sutton \cite{ES07} observed that under the appropriate scaling, these 
tridiagonal matrix models behave like approximate versions of random stochastic operators, and
conjectured that scaling limits of beta ensembles can be described as the spectra of these
objects.

A large class of random matrix models are related to ensembles which are invariant in distribution by unitary or orthogonal conjugation.
Such ensembles correspond to matrices with the same distribution as 
$$M := U^* \operatorname{Diag}(\lambda_1, \dots, \lambda_n) U,$$
where $U$ is uniform on the group $U(n)$ or the group $O(n)$, and independent of $(\lambda_1, \dots, \lambda_n)$ where 
$\lambda_1, \dots, \lambda_n$ are possibly random real numbers. The characteristic polynomials of the successive top-left minors of $M$ form a random sequence of polynomials of decreasing degree, as the iterated derivatives of the polynomial with roots $\lambda_1, \dots, \lambda_n$: as explained below, these two settings can be unified.

\subsection{Unification}
We proceed to the unification of the two settings into a single one, depending on a parameter $\beta \in (0, \infty]$, $\beta = 1$ corresponding to real symmetric matrices and orthogonal conjugation,
$\beta = 2$ to Hermitian matrices and unitary  conjugation, and  $\beta = \infty$ to the sequence of derivations of a given polynomial. For this purpose, for $n \geq 1$, we define a sequence $(P_{n,m})_{0 \leq m \leq n}$ of random polynomials with real roots, 
where $P_{n,m}$ has degree $n-m$, the $n$ roots of $P_{n,0}$ are given at the beginning, and for $1 \leq m \leq n$, the roots of
$P_{n,m}$ are deduced from those of $P_{n,m-1}$ via a recurrence formula depending on $\beta$. 
 In \cite{GNV}, we discuss a similar setting for complex polynomials, but we restrict ourselves to real polynomials in the present article. 

The main theorem of this article shows, under a general boundedness assumption, that for all $\tau \in (0,1)$ and $\beta \in (0, \infty]$, the convergence of the empirical measure of the roots of $P_{n,0}$  towards a limiting measure $\mu$ implies the convergence of the empirical distribution of the zeros of $P_{n,\lfloor n \tau \rfloor}$ towards a limiting measure $\mu_{[\tau]}$ depending on $\mu$ and $\tau$ but not on $\beta$. In other words, for $\beta \in (0, \infty)$, we obtain the same dynamics for the limiting distribution of the roots as for $\beta = \infty$, i.e. for the case where iterated derivatives are considered.  
Hence, we can transfer the description of the dynamics obtained for  $\beta = \infty$ by the authors cited above to the case of finite $\beta$.

The precise setting which is considered, and the main Theorem \ref{th1}, are stated in Section \ref{realsetting}. The proof of Theorem \ref{th1} is given in Section \ref{prooftheorem1}. 

\subsection*{Notation}

It the article,  the notation $X = \mathcal{O}_{a,b,c} (Y)$ means that there exists $K_{a,b,c} > 0$, depending only on $a, b$ and $c$, such that one has always $|X| \leq K_{a,b,c} Y$. 

\section{The main setting} \label{realsetting}
For $n \geq 1$, and for $\beta \in (0, \infty]$, we define the polynomial $P_{n,0}$ by 
$$P_{n,0}(z) = \prod_{j=1}^n 
(z- \lambda^{(n,0)}_j)$$
where $(\lambda^{(n,0)}_j)_{1 \leq j \leq n}$ is a (possibly random)  nondecreasing sequence of real numbers. 

For $1 \leq m \leq n$, we consider 
a random vector 
$(\rho^{(n,m)}_j)_{1 \leq j \leq n+1-m}$ following the Dirichlet distribution with all parameters equal to $\beta/2$, the random vectors and the initial polynomial $P_{n,0}$ being all independent. In the limiting case $\beta = \infty$, we set $\rho^{(n,m)}_j := 1/(n+1-m)$. 

Then, for $1 \leq m \leq n$, we inductively define the polynomial
$$P_{n, m} (z) = \sum_{j=1}^{n+1-m}
\rho^{(n,m)}_j \prod_{1 \leq k \leq n+1 - m, k \neq j} (z- \lambda^{(n,m-1)}_j)$$
and the nondecreasing sequence 
$(\lambda^{(n,m)}_j)_{1 \leq j \leq n-m}$, such that 
$$ P_{n,m} (z) = \prod_{j=1}^{n-m}
(z- \lambda^{(n,m)}_j). 
$$
This construction is motivated by the following result:
\begin{proposition} \label{minors}
For $\beta = \infty$, $P_{n,m}$ is $(n-m)!/n!$ times the $m$-th derivative of $P_{n,0}$. 

For $\beta = 2$ (respectively $\beta = 1$), $(P_{n,m})_{0 \leq m \leq n-1}$ has the same joint distribution as the characteristic polynomials of the successive top-left $(n-m) \times (n-m)$ minors of the $n \times n$ matrix $M_n$, for $0 \leq m \leq n-1$, with 
$$M_n =  U_n^* \operatorname{Diag}(( \lambda_j^{(n,0)})_{1 \leq j \leq n} ) U_n,$$
 $U_n$ being independent of $( \lambda_j^{(n,0)})_{1 \leq j \leq n} )$ and Haar-distributed on the unitary group $U(n)$ (respectively the orthogonal group $O(n)$). 

\end{proposition}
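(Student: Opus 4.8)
The plan is to treat the two cases separately, using a single algebraic identity as the bridge between them. For every $m$, writing $N := n+1-m$ and $\mu_k := \lambda^{(n,m-1)}_k$ for the roots of $P_{n,m-1}$, one has $\prod_{k \neq j}(z-\mu_k) = P_{n,m-1}(z)/(z-\mu_j)$, so the defining recurrence can be recast as the identity of rational functions
$$\frac{P_{n,m}(z)}{P_{n,m-1}(z)} = \sum_{j=1}^{N} \frac{\rho^{(n,m)}_j}{z - \mu_j}.$$
For $\beta = \infty$ all weights equal $1/N$, and the right-hand side is $\frac{1}{N}\frac{d}{dz}\log P_{n,m-1}(z)$; hence $P_{n,m} = \frac{1}{N} P_{n,m-1}'$, and a straightforward induction on $m$, multiplying the successive factors $1/n, 1/(n-1), \dots, 1/(n+1-m)$, gives $P_{n,m} = \frac{(n-m)!}{n!} P_{n,0}^{(m)}$. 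This settles the first assertion.

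For the second assertion I would first record the deterministic linear-algebra fact underlying the whole minor process. Let $A$ be an $N \times N$ Hermitian (resp. real symmetric) matrix and let $B$ be its top-left $(N-1)\times(N-1)$ minor. Expanding the $(N,N)$ cofactor of $zI_N - A$ shows that the $(N,N)$ entry of the resolvent equals the ratio of characteristic polynomials $P_B/P_A$, and diagonalizing $A = \sum_{j=1}^N \alpha_j\, u_j u_j^*$ with orthonormal eigenvectors $u_j$ then yields, via the spectral expansion of the resolvent,
$$\frac{P_B(z)}{P_A(z)} = \bigl[(zI_N - A)^{-1}\bigr]_{NN} = \sum_{j=1}^{N} \frac{|(u_j)_N|^2}{z - \alpha_j},$$
the residues being the squared moduli of the last coordinates of the eigenvectors, which are nonnegative and sum to $\|e_N\|^2 = 1$. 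Comparing with the recast recurrence above, it suffices to identify the matrix whose characteristic polynomial is $P_{n,m-1}$ with the minor $M_n^{(N)}$, the polynomial $P_{n,m}$ with the characteristic polynomial of its further minor $M_n^{(N-1)}$, and to show that the weight vector $\bigl(|(u_j)_N|^2\bigr)_{1\le j\le N}$ is $\mathrm{Dirichlet}(\beta/2,\dots,\beta/2)$-distributed and independent of the rest.

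Two distributional inputs remain. The first is that, when $A$ is unitarily (resp. orthogonally) invariant, one may write $A = W^* \operatorname{Diag}(\alpha) W$ with $W$ Haar on $U(N)$ (resp. $O(N)$) and independent of the spectrum; then $(u_j)_N = \overline{W_{jN}}$, so the weight vector is the vector of squared moduli of one column of a Haar matrix. A uniform unit vector in $\mathbb{C}^N$ (resp. $\mathbb{R}^N$) has squared coordinates distributed as normalized independent $\mathrm{Gamma}(1)$ (resp. $\mathrm{Gamma}(1/2)$) variables, i.e. $\mathrm{Dirichlet}(1,\dots,1)$ (resp. $\mathrm{Dirichlet}(1/2,\dots,1/2)$), which is exactly $\mathrm{Dirichlet}(\beta/2,\dots,\beta/2)$; the same computation shows the weights are independent of the spectrum.

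The second and main input is the Markov structure guaranteeing a fresh, independent Dirichlet weight at each step, which I expect to be the delicate point. Let $\Lambda_k$ denote the spectrum of $M_n^{(k)}$. The key observation is that for $V \in U(N)$, conjugating the full matrix $M_n$ by $\widetilde V := \operatorname{Diag}(V, I_{n-N})$ preserves the distribution of $M_n$, sends $M_n^{(N)} \mapsto V M_n^{(N)} V^*$, and conjugates each larger minor $M_n^{(j)}$, $j \ge N$, by a unitary, hence leaves every spectrum $\Lambda_j$, $j \ge N$, unchanged. Consequently the conditional law of $M_n^{(N)}$ given $(\Lambda_j)_{N \le j \le n}$ is invariant under $U(N)$-conjugation, so conditionally on the whole revealed history its eigenvector matrix is Haar and independent of that history. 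The spectrum $\Lambda_{N-1}$ of $M_n^{(N-1)}$ together with its weight vector then depends only on $\Lambda_N$ and on a Haar matrix independent of the past, which is exactly the Markov transition prescribed by the Dirichlet recurrence. Feeding this into a downward induction on the minor size, with base case $M_n^{(n)} = M_n$ whose spectrum is $(\lambda^{(n,0)}_j)_j$, identifies the joint law of $(P_{M_n^{(n-m)}})_{0 \le m \le n-1}$ with that of $(P_{n,m})_{0 \le m \le n-1}$, completing the proof. The only points requiring care are the almost sure simplicity of the intermediate spectra, so that the eigenvectors and hence the weights are well defined, and the measurable selection of the Haar eigenvector matrix, both of which are standard for these ensembles.
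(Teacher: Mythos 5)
Your proposal is correct and takes essentially the same route as the paper's proof: the $\beta=\infty$ case by direct iteration of the recurrence, and for $\beta\in\{1,2\}$ the same three ingredients, namely (i) the Markov property of the minor spectra deduced from invariance of $M_n$ under conjugation by block matrices $\operatorname{Diag}(V, I)$, (ii) the one-step identity expressing the ratio of successive characteristic polynomials as the corner entry of the resolvent, and (iii) the fact that the squared moduli of a column of a Haar unitary (resp.\ orthogonal) matrix are $\mathrm{Dirichlet}(\beta/2,\dots,\beta/2)$ and independent of the spectrum. The only difference is organizational: the paper checks the single transition for the full matrix and invokes the common inhomogeneous Markov structure of the two sequences, where you run a downward induction over minor sizes with the conditional-invariance argument at each step.
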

\begin{proof}
The case $\beta = \infty$ is immediate. Let us now assume $\beta = 2$, and for $1 \leq p \leq n$, let us denote by $M_{n,p}$ the top-left $p \times p$ minor of $M_n$. 
For $0 \leq m \leq n-1$, the joint distribution of $M_{n,n-m}$ and the characteristic polynomial  of $M_{n, n-r}$
for $0 \leq r \leq m$, which are all measurable functions of $M_n$, is invariant by conjugation with any unitary matrix which is independent of $M_n$. In particular, this joint distribution is invariant by conjugation with 
$\operatorname{Diag} (U_{n-m}, I_m)$ if 
$U_{n-m}$ is Haar-distributed on $U(n-m)$ and independent of $M_n$. Now, conjugation by $\operatorname{Diag} (U_{n-m}, I_m)$ does not change the characteristic polynomial of $M_{n, n-r}$
for $0 \leq r \leq m$, so the joint distribution above is the same as the joint distribution
of $ U_{n-m} ^*  M_{n,n-m}   U_{n-m} $ and the characteristic polynomial of $M_{n, n-r}$
for $0 \leq r \leq m$. 
By independence of $U_{n-m} $ and $M_n$, and the fact that $M_{n,n-m}$ is Hermitian, and then $M_{n,n-m}$ is the unitary conjugate of the diagonal matrix $D_{n,n-m}$ given by the nondecreasing sequence of the eigenvalues of $M_{n,n-m}$, 
we get the same joint distribution 
as  $ U_{n-m} ^*  D_{n,n-m}   U_{n-m} $ and the characteristic polynomials of $M_{n, n-r}$ for $0 \leq r \leq m$. 

We deduce that conditionally on the characteristic polynomials $M_{n, n-r}$ for $0 \leq r \leq m$, 
the joint distribution of the characteristic polynomials of the minors of $M_{n,n-m}$ (which are 
$M_{n, n-r}$ for $m \leq r \leq n-1$)
is the same as the joint distribution of the characteristic polynomials of the minors of 
$ U_{n-m} ^*  D_{n,n-m}   U_{n-m} $, $D_{n,n-m}$ being the diagonal matrix with the same characteristic polynomial as $M_{n,n-m}$. We deduce that the sequence of characteristic polynomials of $M_{n,n-m}$ for $0 \leq m\leq n-1$ is an inhomogeneous Markov process, the transitions depending only on the size of the matrices which are involved. Since $(P_{n,m})_{1 \leq m \leq n-1}$ has, by construction, a similar inhomogeneous Markov structure, it is enough to check that $P_{n,n-1}$ has, for deterministic 
$(\lambda^{(n,0)}_j)_{1 \leq j \leq n}$, the same distribution as the characteristic polynomial of 
$M_{n,n-1}$. 

The characteristic polynomial of $M_{n,n-1}$ is the cofactor of indices $n,n$ of the matrix 
$z I_n - M_n$, and then it can be written as the $n,n$ entry of the matrix 
$$ \operatorname{det} ( z I_n - M_n )   (z I_n - M_n )^{-1} \in \mathcal{M}_n ( \mathbb{C}(z)),$$
and then it is equal to 
\begin{align*}
& \prod_{j=1}^n (z - \lambda^{(n,0)}_j) 
[   U_n^{*}   \operatorname{Diag}((  (z- \lambda_j^{(n,0)})^{-1} )_{1 \leq j \leq n} ) U_n]_{n,n}
 \\ & =  \prod_{j=1}^n (z - \lambda^{(n,0)}_j)  \sum_{k=1}^n (U^*_n)_{n,k}
 (z- \lambda_k^{(n,0)})^{-1} (U_n)_{k,n} 
 \\ & =  \sum_{k=1}^n |(U_n)_{k,n}|^2  \prod_{1 \leq j \leq n, j \neq k} (z - \lambda^{(n,0)}_j)
\end{align*}
This completes the proof of Proposition \ref{minors} for $\beta =2$, since  $((U_n)_{k,n})_{1 \leq k \leq n}$ is uniformly distributed on the unit sphere of $\mathbb{C}^n$, and then 
$(|(U_n)_{k,n}|^2)_{1 \leq k \leq n}$ is Dirichlet distributed, with all parameters equal to $1$. 

The proof for $\beta = 1$ is similar, after replacing Hermitian matrices by real symmetric matrices and unitary matrices by orthogonal matrices. In this case, we get that $((U_n)_{k,n})_{1 \leq k \leq n}$ is uniformly distributed on the unit sphere of $\mathbb{R}^n$, which implies that $(|(U_n)_{k,n}|^2)_{1 \leq k \leq n}$ is Dirichlet distributed, with all parameters equal to $1/2$.

\end{proof} 

The main result of the article is the following: 
\begin{theorem} \label{th1}
We fix $\tau \in (0,1)$, and for all $n \geq 1$, we consider 
$m = \lfloor n \tau \rfloor$, which implies $0 \leq m \leq n-1$. 

We assume that for some $A > 0$, the zeros of 
$P_{n,0}$ are in the interval $[-A, A]$ for all $n \geq 1$, and that the empirical measure
$$\mu_{n,0} := \frac{1}{n} \sum_{j=1}^{n}\delta_{ \lambda^{(n,0)}_{j}}$$
almost surely converges to a limiting measure $\mu$, necessarily supported in $[-A,A]$. 
Then, the empirical distribution $\mu_{n,m}$  of the zeros of $P_{n,m}$, given by
$$\mu_{n,m} := \frac{1}{n-m} \sum_{j=1}^{n-m} \delta_{\lambda^{(n,m)}_{j}},$$
almost surely converges to the same limit as the empirical distribution of the zeros of the $m$-th derivative of $P_{n,0}$, i.e. this limit exists and
is independent of $\beta \in (0, \infty]$. 
\end{theorem}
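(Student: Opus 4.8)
The plan is to follow the Cauchy--Stieltjes transform and to compare the randomized recursion, step by step, with the deterministic ($\beta=\infty$) one. Since $\prod_{k\neq j}(z-\lambda^{(n,m-1)}_k) = P_{n,m-1}(z)/(z-\lambda^{(n,m-1)}_j)$, the definition of $P_{n,m}$ factorizes as $P_{n,m}(z) = P_{n,m-1}(z)\,R_{n,m}(z)$, where $R_{n,m}(z) := \sum_{j=1}^{n+1-m} \rho^{(n,m)}_j/(z-\lambda^{(n,m-1)}_j)$. Taking the logarithmic derivative and writing $s_{n,m}(z) := \frac{1}{n-m}\,P_{n,m}'(z)/P_{n,m}(z) = \int (z-x)^{-1}d\mu_{n,m}(x)$ gives the exact scalar recursion
\begin{equation} \label{planrec}
(n-m)\,s_{n,m}(z) = (n+1-m)\,s_{n,m-1}(z) + \frac{R_{n,m}'(z)}{R_{n,m}(z)} .
\end{equation}
Because a positive convex combination $\sum_j\rho_j\prod_{k\neq j}(z-\lambda_k)$ has its zeros interlacing the $\lambda_k$, all roots remain in $[-A,A]$, and it is enough to prove almost sure pointwise convergence of $s_{n,m}(z)$ for real $z>2A$: on this half-line the supports lie in $[-A,A]$, so $R_{n,m}(z)$ and $s_{n,m}(z)$ are \emph{deterministically} confined to $[\,(z+A)^{-1},(z-A)^{-1}\,]$, hence bounded away from $0$, and convergence there is equivalent to the weak convergence of the $\mu_{n,m}$. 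Taking in \eqref{planrec} the weights $\rho^{(n,m)}_j = 1/(n+1-m)$ gives $R_{n,m}=s_{n,m-1}$ and the same recursion for the iterated-derivative transform $\tilde s_{n,m}$ (by Proposition \ref{minors}), which converges to the transform $s_{[\tau]}$ of $\mu_{[\tau]}$ by the known results on repeated differentiation.

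Conditionally on the $\sigma$-field $\mathcal F_{m-1}$ generated by the first $m-1$ steps, the only randomness in $R_{n,m}$ is the fresh Dirichlet vector; a direct moment computation (realizing the weights as normalized independent Gamma variables) gives $\mathbb E[R_{n,m}(z)\mid\mathcal F_{m-1}] = s_{n,m-1}(z)$ and $\operatorname{Var}(R_{n,m}(z)\mid\mathcal F_{m-1}) = \mathcal O_{\beta,A}(1/n)$, with identical estimates for the $z$-derivative $R_{n,m}'$. Setting
\begin{equation} \label{planxi}
\xi_{n,m}(z) := \frac{R_{n,m}'(z)}{R_{n,m}(z)} - \frac{s_{n,m-1}'(z)}{s_{n,m-1}(z)},
\end{equation}
the deterministic lower bound on $R_{n,m}$ makes $x\mapsto x'/x$ Lipschitz with no exceptional event, so that a second-order expansion around the conditional mean yields the two bounds $\mathbb E[\,|\xi_{n,m}|^2\mid\mathcal F_{m-1}\,] = \mathcal O(1/n)$ and $|\mathbb E[\,\xi_{n,m}\mid\mathcal F_{m-1}\,]| = \mathcal O(1/n)$, the bias being of the same order as the variance because the linear term of the expansion is centred.

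I would then control the error $e_{n,m}(z) := s_{n,m}(z) - \tilde s_{n,m}(z)$, where $\tilde s$ runs the deterministic recursion from the common initial datum $\tilde s_{n,0}=s_{n,0}$, so $e_{n,0}=0$. Subtracting the two instances of \eqref{planrec} and writing $\frac{s_{n,m-1}'}{s_{n,m-1}}-\frac{\tilde s_{n,m-1}'}{\tilde s_{n,m-1}}$ as a divided difference in $e_{n,m-1}$ gives the exact linear recursion $e_{n,m} = c_{n,m}\,e_{n,m-1} + \xi_{n,m}/(n-m)$, where the coefficient $c_{n,m}$ is $\mathcal F_{m-1}$-measurable and satisfies $|c_{n,m}| \le 1 + C/(n-m)$. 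The crucial point is \emph{not} to unroll this into a sum of the $|\xi_{n,k}|$ (which, having mean of order $n^{-1/2}$, would destroy all cancellation), but to propagate the second moment $V_m := \mathbb E\,|e_{n,m}|^2$ directly: conditioning on $\mathcal F_{m-1}$, using that $c_{n,m}$ is then known, and handling the cross term by the arithmetic--geometric inequality, one gets $V_m \le (1 + C_3/(n-m))\,V_{m-1} + \mathcal O(1/n^3)$; since $\prod_{l}(1+C_3/(n-l)) \le (n/(n-m))^{C_3} \le (1-\tau)^{-C_3}$ for $m=\lfloor n\tau\rfloor$, unrolling yields $V_m = \mathcal O(m/n^3) = \mathcal O(1/n^2)$. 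These second moments are summable in $n$, so Borel--Cantelli gives $e_{n,m}(z)\to 0$ almost surely for each fixed $z>2A$; a countable dense set of such $z$, together with the local equicontinuity of the uniformly bounded family $\{s_{n,m}\}$, upgrades this to almost sure convergence on $(2A,\infty)$. Combined with $\tilde s_{n,m}\to s_{[\tau]}$ this gives $s_{n,m}\to s_{[\tau]}$ a.s., i.e. $\mu_{n,m}\to\mu_{[\tau]}$ with a limit independent of $\beta$.

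The main obstacle is the stability of this nonlinear recursion under the accumulated noise: a priori the error can be amplified at each step by $(n+1-m)/(n-m)$ and through the division by $R_{n,m}$. Two features tame it. First, the exact telescoping $\prod_{l=1}^{m}\frac{n+1-l}{n-l} = \frac{n}{n-m}$ keeps the cumulative amplification bounded by $(1-\tau)^{-1}$, which is precisely where the assumption $\tau<1$ enters. Second, propagating the second moment through the recursion (rather than bounding it by an absolute-value sum) retains the martingale cancellation of the fresh noise and turns the per-step contribution into $\mathcal O(1/n^3)$; the delicate book-keeping is to show that the conditional bias of $\xi_{n,m}$ in \eqref{planxi} is only $\mathcal O(1/n)$ and that, after the arithmetic--geometric step, it contributes at the harmless order $\mathcal O(1/n^3)$.
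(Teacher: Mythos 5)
Your plan breaks at the step where you claim the exact linear recursion $e_{n,m} = c_{n,m}\,e_{n,m-1} + \xi_{n,m}/(n-m)$. Subtracting the deterministic instance of your recursion from the randomized one gives
\begin{align*}
(n-m)\,e_{n,m} = (n+1-m)\,e_{n,m-1} + \xi_{n,m} + \left( \frac{s_{n,m-1}'}{s_{n,m-1}} - \frac{\tilde s_{n,m-1}'}{\tilde s_{n,m-1}} \right),
\end{align*}
and the last bracket is \emph{not} a divided difference in $e_{n,m-1}$: writing $s = s_{n,m-1}$, $\tilde s = \tilde s_{n,m-1}$, $e = e_{n,m-1}$, one has
$$\frac{s'}{s} - \frac{\tilde s'}{\tilde s} \;=\; \frac{e'}{\tilde s} \;-\; \frac{s'}{s\,\tilde s}\, e ,$$
which involves the $z$-derivative $e_{n,m-1}'$ of the error. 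The map $s \mapsto s'/s$ is not a function of the scalar value $s(z)$; the value $s'(z)$ is independent data determined by the whole measure. So the error recursion does not close pointwise in $z$: it couples $e$ to $e'$, whose recursion couples to $e''$, and so on. Consequently the coefficient bound $|c_{n,m}| \le 1 + C/(n-m)$ and the second-moment propagation $V_m \le (1+C_3/(n-m))V_{m-1} + \mathcal{O}(1/n^3)$ --- the heart of your argument --- are not established. A repair would require controlling the whole hierarchy of derivatives (say, via Cauchy estimates on a shrinking family of domains), and there the martingale cancellation you rely on is no longer free: once you pass to suprema of $|e|$ over domains, the cross terms are no longer centred, so the bias--variance bookkeeping collapses. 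Your conditional moment estimates for $R_{n,m}$ and the Borel--Cantelli endgame are fine, but the propagation step, as written, is a genuine gap.

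It is instructive to see how the paper sidesteps exactly this difficulty: it never runs the randomized chain and the derivative chain in parallel (which is what forces a stability analysis of the nonlinear dynamics). Instead it interpolates through the triangular array $P_{n,k}^{m-k}$, i.e. $k$ randomized steps followed by $m-k$ honest derivatives, and compares consecutive entries $P_{n,k}^{m-k}(z)$ and $P_{n,k+1}^{m-k-1}(z)$. These two quantities are functions of the \emph{same} root configuration $(\lambda^{(n,k)}_j)$ and the same fresh Dirichlet vector: explicitly, $P_{n,k}^{m-k}(z) = \frac{1}{n-k}\sum_j X_j$ while $P_{n,k+1}^{m-k-1}(z) = \sum_j \rho_j^{(n,k+1)} X_j$, with the same positive, pairwise comparable terms $X_j$. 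Each comparison is then pure concentration of a Dirichlet-weighted average against the plain average (a sixth-moment bound giving error probability $\mathcal{O}_{\beta,A,\delta}((n-k)^{-3})$ for a \emph{fixed} log-discrepancy $\delta$), with no error propagation whatsoever; the telescoped bound $|\log P_{n,m}(z) - \log P_{n,0}^m(z)| \le m\delta$ is harmless because the theorem only needs convergence of $\frac{1}{n}\log P_{n,m}(z)$, the log-potential, not of the increments. If you want to salvage your scheme, the most economical fix is to adopt this interpolation: compare log-potentials (or Stieltjes transforms) of $P_{n,k}^{m-k}$ and $P_{n,k+1}^{m-k-1}$ rather than of the two chains run side by side.
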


The proof of this theorem is given in the following section. 
\section{Proof of theorem \ref{th1}} \label{prooftheorem1}
The convergence of the empirical distribution of the zeros of the $m$-th derivative of $P_{n,0}$, corresponding to the case $\beta = \infty$, is proven in \cite{HKab21,HoS,Kab,Kis,tao,AFU,AJ, AVP}. From now, we assume that $\beta$ is finite. 

We fix $z$ larger that all roots of  $P_{n,0}$, and then larger than all roots of $P_{n,m}$, $0 \leq m \leq n$, by interlacing property. 
In this case, all terms and factors involved in the expressions above are positive. 

For  $n \geq 1$, $0 \leq k \leq n$ and  $0 \leq r \leq n-k$, let us  denote by $ P_{n,k}^r(z) $ the $r$-th derivative of $P_{n,k}$ at $z$, renormalized in such a way that the leading coefficient of the polynomial is equal to $1$. 

For some $m$, $0 \leq m \leq n$, let us compare $P_{n,m}^0(z):=P_{n,m}(z)$ with 
$ P_{n,0}^m(z) :=P_{n,0}^{(m)}(z) (n-m)!/ n! $, the renormalized $m$-th derivative of $P_{n,0}$ at $z$.
For this purpose, we compare, for $0 \leq k \leq m-1$, 
$ P_{n,k}^{m-k}(z)$
with $ P_{n,k+1}^{m-k-1}(z)$, and  we construct a triangular sequence of polynomials 
\begin{align*}
P_{n,0}  \;  &   \; P_{n,1} & ... & \; & P_{n,k} \;  & \;  P_{n,k+1} &  ... \; & \; P_{n,m-1} &  P_{n,m} \\
P_{n,0}^1  \; &  \;  P_{n,1}^1 & ... & \; & P_{n,k}^1 \; & \; P_{n,k+1}^1 &  ... \; & \; P_{n,m}^1 \\
. \\
. \\
. & \;   & ... & \; & \; & \; P_{n,k+1}^{m-k-1} \\
.  & \;   & ... & \; & \;  P_{n,k}^{m-k} \\
. \\
. \\
P_{n,0}^m  \\
\end{align*}
We observe that 
for all $\lambda_1, \dots, \lambda_p \in \mathbb{R}$ and $0 \leq q \leq p$,
\begin{align*}\frac{d^q}{dz^q} \prod_{j=1}^p (z - \lambda_j) 
& = \sum_{1 \leq j_1 \neq \dots \neq j_q \leq p} \prod_{1 \leq j \leq p, 
j \neq j_1, \dots, j_q} (z - \lambda_j) 
\\ & = q! \sum_{J \in \{1, \dots, p\}, |J| = q} \prod_{1 \leq j \leq p, 
j \notin J} (z - \lambda_j)
\\ & = q! \sum_{J^c \in \{1, \dots, p\}, |J^c| = p-q} \prod_{1 \leq j \leq p, 
j \in J^c} (z - \lambda_j)
\\ & = \frac{q!}{(p-q)!} 
\sum_{1 \leq j_1 \neq \dots \neq j_{p-q} \leq p}
\prod_{1 \leq \ell \leq p-q} (z - \lambda_{j_{\ell}}) 
\end{align*} 

Hence, 

\begin{equation}
 P_{n,k}^{m-k}(z):=\frac{(n-m)! P_{n,k}^{(m-k)}(z)}{ (n-k)!}  = \frac{(m-k)!}{(n-k)!} 
 \sum_{1 \leq j_1 \neq j_2 \neq \dots \neq j_{n-m} \leq n-k} \, \prod_{\ell=1}^{n-m}  (z- \lambda^{(n,k)}_{j_\ell}).
\label{Pnk}
\end{equation}
Moveover, since $P_{n, k+1} (z) = \sum_{j=1}^{n-k}
\rho^{(n,k+1)}_j  \prod_{1 \leq r \leq n-k, r \neq j} (z- \lambda^{(n,k)}_{r})$, we have
\begin{align*}  P_{n,k+1}^{m-k-1} (z) & = \frac{(m-k-1)!}{(n-k-1)!} 
\sum_{j=1}^{n-k} \rho_j^{(n,k+1)} \, \sum_{1 \leq j_1 \neq j_2 \neq \dots \neq j_{n-m} \neq j \leq n-k}
\prod_{\ell=1}^{n-m}  (z- \lambda^{(n,k)}_{j_\ell}),
\\ &  = \sum_{j=1}^{n-k} X_j \rho_j^{(n,k+1)}
\end{align*}
where for $1 \leq j \leq n-k$,
$$X_j = \frac{(m-k-1)!}{(n-k-1)!}
\, \sum_{1 \leq j_1 \neq j_2 \neq \dots \neq j_{n-m} \neq j \leq n-k}
\prod_{\ell=1}^{n-m}  (z- \lambda^{(n,k)}_{j_\ell}).
$$
The expression \eqref{Pnk} can be 
modified by introducing the choice of an extra index 
$j \in \{1, \dots, n-k\}$, which should be 
different from $j_1, \dots, j_{n-m}$. For given $j_1, \dots, j_{n-m}$, the number of possible choices for $j$ is $(n-k)-(n-m) = m-k$. Hence,
\begin{align*}
 P_{n,k}^{m-k}(z) & = \frac{(m-k)!}{(n-k)!} 
 \sum_{1 \leq j_1 \neq j_2 \neq \dots \neq j_{n-m} \leq n-k} 
\;  \sum_{1 \leq j \leq n-k, 
 j \neq j_1, \dots, j_{n-k}}
 \frac{1}{m-k}
 \prod_{\ell=1}^{n-m}  (z- \lambda^{(n,k)}_{j_\ell}).
 \\ & = \frac{(m-k)!}{(n-k)!}
 \sum_{j=1}^{n-k} 
 \sum_{1 \leq j_1 \neq j_2 \neq \dots \neq j_{n-m} \neq j \leq n-k}
\frac{1}{m-k} \prod_{\ell=1}^{n-m}  (z- \lambda^{(n,k)}_{j_\ell}),
\\ & = \frac{1}{n-k} \sum_{j=1}^{n-k} X_j.
\end{align*}
We notice that the expectation of the random variables 
$\rho_j^{(n,k+1)}$ is equal to $\frac{1}{n-k}$.\\
We now have the following result: 
\begin{proposition} \label{prop2}
For $A  \geq \max_{1 \leq j \leq n} |\lambda^{(n,0)}_j|$, $z > A +1$, and   $\delta > 0$, one has
$$\mathbb{P} 
\left( \left| \log \left( P_{n,k+1}^{m-k-1} (z)   \right) - \log \left(  P_{n,k}^{m-k}(z) \right)\right| \geq \delta  \right)
= \mathcal{O}_{\beta, A, \delta} ((n-k)^{-3}).$$
\end{proposition}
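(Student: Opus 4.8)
The plan is to reduce the statement to a concentration estimate for the ratio $T/S$, where $S := P_{n,k}^{m-k}(z) = \frac{1}{N}\sum_{j=1}^{N} X_j$ and $T := P_{n,k+1}^{m-k-1}(z) = \sum_{j=1}^N X_j\rho_j^{(n,k+1)}$, writing $N := n-k$ throughout. Since the event $\{|\log(T/S)|\ge\delta\}$ is contained in $\{|T/S - 1|\ge\delta'\}$ with $\delta' := 1 - e^{-\delta} > 0$ depending only on $\delta$, it suffices to bound $\mathbb P(|T/S-1|\ge\delta')$. Using $\mathbb E[\rho_j^{(n,k+1)}] = 1/N$ and $\sum_j\rho_j^{(n,k+1)} = 1$, I would write
\begin{equation*}
\frac{T}{S} - 1 = \sum_{j=1}^N\Big(\frac{N X_j}{\sum_i X_i} - 1\Big)\rho_j^{(n,k+1)} =: \sum_{j=1}^N\epsilon_j\,\rho_j^{(n,k+1)},
\end{equation*}
where $\epsilon_j := N X_j/\sum_i X_i - 1$ satisfies $\sum_j\epsilon_j = 0$. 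The argument then rests on two independent ingredients: a deterministic bound showing that the weights $X_j$ are mutually comparable, so that $|\epsilon_j|$ is controlled by a constant depending only on $A$; and a sixth-moment estimate for the centered Dirichlet combination $\sum_j\epsilon_j\rho_j^{(n,k+1)}$.

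For the comparability I would rewrite $X_j$ and $S$ as normalized elementary symmetric functions of the positive numbers $y_r := z - \lambda^{(n,k)}_r\in(z-A,z+A)$. Using \eqref{Pnk} one checks that $S = e_d([N])/\binom{N}{d}$ and $X_j = e_d([N]\setminus\{j\})/\binom{N-1}{d}$, where $d := n-m$ and $e_d$ is the degree-$d$ elementary symmetric polynomial. From the splitting $e_d([N]) = e_d([N]\setminus\{j\}) + y_j\, e_{d-1}([N]\setminus\{j\})$, together with the elementary two-sided bound
\begin{equation*}
\frac{d}{(N-d)(z+A)} \le \frac{e_{d-1}(S')}{e_d(S')} \le \frac{d}{(N-d)(z-A)}\qquad(|S'| = N-1),
\end{equation*}
which follows from the identity $d\, e_d(S') = \sum_{|T'|=d-1}(\prod_{r\in T'}y_r)\,(\sum_{s\notin T'}y_s)$ and $y_s\in(z-A,z+A)$, a short computation gives
\begin{equation*}
\frac{1}{\gamma} \le \frac{X_j}{S} \le \gamma,\qquad \gamma := \frac{z+A}{z-A} \le 2A+1,
\end{equation*}
uniformly in $j$ and in $z > A+1$. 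Hence $N X_j/\sum_i X_i = X_j/S\in[\gamma^{-1},\gamma]$ and $|\epsilon_j|\le\gamma - 1\le 2A$. I expect this uniform-in-$z$ comparability to be the main obstacle: the crude bound $X_j/S\le N/(N-d)$ coming directly from positivity degenerates when $m-k$ is small, and the point of the symmetric-function identity is to control the ratio by the single factor $\gamma$ rather than by a power of it.

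For the moment estimate I would use the Gamma representation $\rho_j^{(n,k+1)} = G_j/\Sigma$, with the $G_j$ i.i.d.\ Gamma of shape $\alpha := \beta/2$ and $\Sigma := \sum_i G_i$, exploiting the classical independence of the normalized vector $(\rho_j^{(n,k+1)})_j$ and its sum $\Sigma$. This gives
\begin{equation*}
\mathbb E\Big[\big(\textstyle\sum_j\epsilon_j\,\rho_j^{(n,k+1)}\big)^6\Big] = \frac{\mathbb E\big[(\sum_j\epsilon_j G_j)^6\big]}{\mathbb E[\Sigma^6]}.
\end{equation*}
Since $\sum_j\epsilon_j = 0$, the numerator equals $\mathbb E[(\sum_j\epsilon_j(G_j-\alpha))^6]$, a sixth moment of a sum of independent centered variables; Rosenthal's inequality (or a direct multinomial expansion keeping only terms in which every index occurs at least twice) together with $|\epsilon_j|\le 2A$ bounds it by $\mathcal O_{\beta}\big((\sum_j\epsilon_j^2)^3 + \sum_j\epsilon_j^6\big) = \mathcal O_{\beta,A}(N^3)$, using $\sum_j\epsilon_j^2,\sum_j\epsilon_j^6 = \mathcal O_A(N)$. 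The denominator is $\mathbb E[\Sigma^6] = \prod_{i=0}^5(N\alpha+i)\ge(N\alpha)^6$, so the sixth moment is $\mathcal O_{\beta,A}(N^{-3})$. Finally Markov's inequality yields $\mathbb P(|T/S-1|\ge\delta')\le(\delta')^{-6}\,\mathbb E[(T/S-1)^6] = \mathcal O_{\beta,A,\delta}((n-k)^{-3})$, which completes the plan.
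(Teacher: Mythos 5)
Your proof is correct, and its skeleton matches the paper's: reduce to concentration of the Dirichlet-weighted average of the $X_j$ around their uniform average, show the weights are mutually comparable up to a constant depending only on $A$, then conclude via a sixth moment plus Markov's inequality, the $(n-k)^{-3}$ rate coming from counting index patterns in which every index occurs at least twice. Two ingredients genuinely differ. (i) To handle the Dirichlet normalization, the paper writes $\rho_j^{(n,k+1)} = \gamma_j/\sum_\ell \gamma_\ell$ with i.i.d.\ Gamma$(\beta/2)$ variables and disposes of the denominator by a large-deviation bound $\mathbb{P}\big(\tfrac{1}{(\beta/2)(n-k)}\sum_\ell \gamma_\ell \notin (e^{-\delta/2},e^{\delta/2})\big) = \mathcal{O}(e^{-c_{\beta,\delta}(n-k)})$, after which only the centered numerator needs moment bounds; you instead invoke the exact independence of the normalized vector and its sum $\Sigma$, so that the sixth moment of $\sum_j \epsilon_j \rho_j^{(n,k+1)}$ becomes the clean ratio $\mathbb{E}\big[\big(\sum_j \epsilon_j G_j\big)^6\big]/\mathbb{E}[\Sigma^6]$. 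This is slicker: it deletes the large-deviation step and the accompanying event decomposition altogether. (ii) For comparability of the weights, the paper's argument is more elementary than yours: $Y_j$ and $Y_{j'}$ are sums of products of the positive factors $z-\lambda_r$, and one passes from one sum to the other by substituting $z-\lambda_j$ for $z-\lambda_{j'}$ wherever it occurs; each term changes by a factor lying in $[1/(2A+1),\,2A+1]$, whence $Y_j/Y_{j'}\le 2A+1$ and $R_j = Y_j/\sum_\ell Y_\ell \le (2A+1)/(n-k)$ directly. So the obstacle you flag (degeneration of the crude positivity bound when $m-k$ is small) is dissolved in the paper by this two-line swap; your elementary-symmetric-function identity buys the slightly sharper two-sided constant $\gamma = (z+A)/(z-A)$ but is not needed. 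Finally, two small points you leave implicit and should state, as the paper does: the whole estimate is performed conditionally on $(\lambda_r^{(n,k)})_r$, which is legitimate because the Dirichlet vector is independent of them; and the containment $\lambda_r^{(n,k)} \in [-A,A]$, which you use in the form $y_r \in (z-A,z+A)$, is not an assumption but a consequence of the interlacing of roots under the randomized differentiation step.
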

\begin{proof}
With the notation above, we have to show 
$$\mathbb{P} \left( 
\frac{\sum_{j=1}^{n-k} X_j (n-k) \rho^{(n, k+1)}_j}{\sum_{j=1}^{n-k} X_j  } \notin (e^{-\delta}, e^{\delta}) \right) = \mathcal{O}_{\beta, A, \delta}  ((n-k)^{-3}).$$
It is sufficient to prove the 
same result for the conditional 
probability given the values of 
$(\lambda_j^{(n,k)})_{1 \leq j \leq n-k} $, where, by the properties of  interlacing  satisfied by the zeros of the polynomials involved in this discussion, $|\lambda_j^{(n,k)}| \leq A$ for all $j \in \{1, \dots, n-k\}$. 
It is then enough to prove the following: 
for all deterministic families $(\lambda_j)_{1 \leq j \leq n-k}$ of elements in $[-A,A]$,  
for
$$Y_j = \sum_{1 \leq j_1 \neq \dots \neq j_{n-m} \neq j \leq n-k} \prod_{\ell = 1}^{n-m} (z - \lambda_{j_{\ell}}),$$
$$\mathbb{P} \left( 
\frac{\sum_{j=1}^{n-k} Y_j (n-k) \rho^{(n, k+1)}_j}{\sum_{j=1}^{n-k} Y_j  } \notin (e^{-\delta}, e^{\delta}) \right) = \mathcal{O}_{\beta, A, \delta}  ((n-k)^{-3}).$$
Since $(\rho^{(n,k+1)}_j)_{1 \leq j \leq n-k}$ is Dirichlet distributed with all parameters equal to $\beta/2$, it has the same distribution as 
$$ \left( \frac{\gamma_j}{ \sum_{\ell=1}^{n-k} \gamma_{\ell} } \right)_{1 \leq j \leq n-k}$$
where $(\gamma_j)_{1 \leq j \leq n-k}$ are i.i.d. Gamma variables with parameter $\beta/2$. 
By large deviation properties 
satisfied by the Gamma variables, 
we have 
$$\mathbb{P} \left( 
\frac{1}{(\beta/2) (n-k)} \sum_{\ell=1}^{n-k} \gamma_{\ell}
\notin (e^{-\delta/2}, e^{\delta/2}) 
\right) = \mathcal{O}_{\beta, \delta} (e^{-c_{\beta,\delta} (n-k)}) = \mathcal{O}_{\beta, A, \delta}  ((n-k)^{-3}).
$$
where $c_{\beta, \delta} > 0$ depends only on $\beta$ and $\delta$. 
If this event does not occur, 
we can then write 
$$\frac{(\beta/2)(n-k) \rho_j^{(n,k+1)} }{ 
\gamma_j} \in (e^{-\delta/2}, e^{\delta/2} ).$$
Since $Y_j$ is a sum of positive terms (recall that 
$z > A+1$ and $|\lambda_j| \leq A$
for $1 \leq j \leq n-k$),
it is sufficient to prove 
$$\mathbb{P} \left( 
\frac{\sum_{j=1}^{n-k} Y_j \gamma_j}{(\beta/2) \sum_{j=1}^{n-k} Y_j  } \notin (e^{-\delta/2}, e^{\delta/2}) \right) = \mathcal{O}_{\beta, A, \delta}  ((n-k)^{-3}),$$
i.e. 
$$\mathbb{P} \left( 
\frac{\sum_{j=1}^{n-k} Y_j \widetilde{\gamma}_j}{(\beta/2) \sum_{j=1}^{n-k} Y_j  } \notin (e^{-\delta/2} - 1, e^{\delta/2} - 1) \right) = \mathcal{O}_{\beta, A, \delta}  ((n-k)^{-3}),$$
where 
$$\widetilde{\gamma}_j := 
\gamma_j - \frac{\beta}{2} = 
\gamma_j - \mathbb{E} [\gamma_j].$$
It is then enough to show, 
for $\varepsilon > 0$, 
$$\mathbb{P} 
\left( \left| \sum_{j=1}^{n-k} 
R_j \widetilde{\gamma}_j \right| > \varepsilon \right)  = \mathcal{O}_{\beta, A, \varepsilon}  ((n-k)^{-3}),$$
for 
$$R_j = \frac{Y_j}{ \sum_{\ell = 1}^{n-k} Y_{\ell} }.$$
Now, for $1 \leq j, j' \leq n-k$, $Y_{j}$ and $Y_{j'}$ are sums of products of positive factors, and one can go from the expression of $Y_{j}$ to the expression of $Y_{j'}$ by changing the factor $z - \lambda_{j'}$ by $z - \lambda_{j} $ each time it appears. This change multiply each term by a ratio
$(z-\lambda)/(z - \lambda')$
for $z > A+1$ and $\lambda, \lambda' \in [-A,A]$. This ratio is 
always between $1/(2A+1)$ and $2A+1$. Hence, 
$$\frac{1}{2A+1} \leq \frac{Y_j}{Y_{j'}} \leq 2A+1.$$
We deduce 
$$R_j = \frac{Y_j}{ \sum_{\ell = 1}^{n-k} Y_{\ell} }
\leq \frac{Y_j}{ \sum_{\ell = 1}^{n-k} Y_{j}/(2A+1)}
= \frac{2A+1}{n-k}
= \mathcal{O}_A ((n-k)^{-1}).
$$
Applying Markov's inequality to the sixth power, it is enough to show 
$$\mathbb{E} \left[ 
\left(\sum_{j=1}^{n-k}  
R_j \widetilde{\gamma}_j \right)^6 \right] = \mathcal{O}_{\beta,A} ((n-k)^{-3}).$$
The left-hand side can be written as 
$$\sum_{1 \leq j_1, \dots, j_6 \leq n-k} 
R_{j_1} R_{j_2} \dots R_{j_6} 
\mathbb{E} [\widetilde{\gamma}_{j_1}
\widetilde{\gamma}_{j_2} \dots 
\widetilde{\gamma}_{j_6} ]
= \mathcal{O}_{\beta,A} ((n-k)^{-6})\sum_{1 \leq j_1, \dots, j_6 \leq n-k} 
\mathbb{E} [\widetilde{\gamma}_{j_1}
\widetilde{\gamma}_{j_2} \dots 
\widetilde{\gamma}_{j_6} ].$$
The last expectation is always $\mathcal{O}_{\beta} (1)$. Moreover, by the fact that the variables $(\widetilde{\gamma}_{j})_{1 \leq j \leq n-k}$ are independent and centered, the expectation vanishes 
as soon as one of the six indices 
is different from all the others.
Hence, 
$$\mathbb{E} \left[ 
\left(\sum_{j=1}^{n-k}  
R_j \widetilde{\gamma}_j \right)^6 \right] 
= \mathcal{O}_{\beta,A} ( \mathcal{N} (n-k)^{-6})$$
where $\mathcal{N}$
 is the number of $6$-tuples
of indices between $1$ and $n-k$, such that none of the possible indices appears exactly once. 
For these $6$-uples, each index which appears is involved at least twice, so there are at most three different indices. The number of choices 
for these indices is at most $(n-k)^3$, 
and for each of these choices, there is a bounded number of orderings of the indices.  
Hence, $\mathcal{N}  = \mathcal{O}((n-k)^3)$ and 
$$\mathbb{E} \left[ 
\left(\sum_{j=1}^{n-k}  
R_j \widetilde{\gamma}_j \right)^6 \right] 
= \mathcal{O}_{\beta,A} (  (n-k)^{-3}),$$
which completes the proof of the proposition. 
 
From a union bound for the $m$ events corresponding to each value of $k \in \{0, 1, \dots, m-1\}$, and from the fact that 
$$P_{n,0}^m(z) = \frac{(n-m)! P_{n,0}^{(m)}(z)}{ n!},$$ 
we immediately deduce the following: 
\begin{corollary} \label{corollary}
Let $A \geq \max_{1 \leq j \leq n} |\lambda^{(n,0)}_j|$, let us assume $z > A +1$, and let $\delta > 0$. Then, 
$$\mathbb{P} 
\left( \left| \log \left( P_{n,m} (z)  \right) - \log \left( \frac{(n-m)! P_{n,0}^{(m)}(z)}{ n!}\right) \geq m \delta \right| \right)
= \mathcal{O}_{\beta, A, \delta} (m (n+1-m)^{-3}).$$

\end{corollary}

Let us now complete the proof of Theorem \ref{th1}.

By Corollary \ref{corollary}, 
for $n \geq 1$ larger than 
a value depending only on $\tau$, $m = \lfloor n \tau \rfloor$, 
and for $z > A+1$, we have, with probability $1 - \mathcal{O}_{\beta, A, \delta, \tau} (n^{-2})$, 

$$\left| \log P_{n,m}(z) - \log 
\left((n-m)! P_{n,0}^{(m)}(z) / n!  \right)\right| \leq m \delta 
\leq  n \delta. $$
Hence, for all $z > A+1$, $\delta > 0$, we have almost surely 

$$\underset{n \rightarrow \infty}{\lim \sup} \left|\frac{1}{n} \log P_{n,m}(z) - \frac{1}{n} \log 
\left((n-m)! P_{n,0}^{(m)}(z) / n!  \right)\right| \leq \delta.  
$$
Since $\delta > 0$ is arbitrary, 
for all $z  > A+1$, we have almost surely
$$\frac{1}{n} \log P_{n,m}(z) - \frac{1}{n} \log 
\left((n-m)! P_{n,0}^{(m)}(z) / n!  \right) \underset{n \rightarrow \infty}{\longrightarrow} 0. $$
Since $n$ and $n-m$ have the same order of magnitude, this convergence is equivalent to 
$$\int_{\mathbb{R}} 
\log (z - \lambda) d \mu_{n,m} (\lambda) - \int_{\mathbb{R}} 
\log (z - \lambda) d \mu'_{n,m} (\lambda) \underset{n \rightarrow \infty}{\longrightarrow} 0$$
where $\mu_{n,m}$ is the empirical distribution of the zeros of $P_{n,m}$, 
$$\mu_{n,m} := \frac{1}{n-m} \sum_{j=1}^{n-m} \lambda^{(n,m)}_{j}$$
and $\mu'_{n,m}$ is the empirical distribution of the zeros of 
the $m$-th derivative of $P_{n,0}$. 
From results related to the evolution of the zeros of the derivatives of random polynomials see e.g. \cite{AJ},
we deduce that since $\mu_{n,0}$ converges to a deterministic measure $\mu$ when $n \rightarrow \infty$, $\mu'_{n,m}$ converges to a measure $\mu_{[\tau]}$ depending only on $\mu$ and $\tau$. 
Since for $z > A+1$, $\lambda \mapsto \log (z -\lambda)$ is continuous and bounded on the interval $[-A,A]$, which contains the support of all measures involved here, we get 
$$\int_{\mathbb{R}} 
\log (z - \lambda) d \mu'_{n,m} (\lambda) \underset{n \rightarrow \infty}{\longrightarrow} 
\int_{\mathbb{R}} 
\log (z - \lambda) d\mu_{[\tau]} (\lambda).$$
Hence, almost surely, 
$$\int_{\mathbb{R}} 
\log (z - \lambda) d \mu_{n,m} (\lambda) \underset{n \rightarrow \infty}{\longrightarrow} 
\int_{\mathbb{R}} 
\log (z - \lambda) d\mu_{[\tau]} (\lambda).$$
Almost surely, this convergence occurs for all $z \in \mathbb{Q}$, 
$z > A+1$. 
Since the measures $(\mu_{n,m})_{n \geq 1, m = \lfloor n \tau \rfloor}$ are supported in $[-A,A]$, they form a random, tight family of probability measures. 
Let $\nu$ be a possibly random limit point of this sequence. 
We have, along a random subsequence, for all $z > A + 1$, 
$$\int_{\mathbb{R}} 
\log (z - \lambda) d \mu_{n,m} (\lambda) \underset{n \rightarrow \infty}{\longrightarrow} 
\int_{\mathbb{R}} 
\log (z - \lambda) d \nu (\lambda)$$
and then almost surely, for all $z \in \mathbb{Q} \cap (A+1, \infty)$,  
$$\int_{\mathbb{R}} 
\log (z - \lambda) d \nu (\lambda)
= \int_{\mathbb{R}} 
\log (z - \lambda) d \mu_{[\tau]} (\lambda).$$
The equality extends to all reals $z > A+1$ by continuity. 
Then, by taking the derivatives, 
the Stieltjes transforms of $\mu_{[\tau]}$ and $\nu$ almost surely coincide on the interval $(A+1, \infty)$. 
and then on $\mathbb{C} \backslash [-A,A]$ by analytic continuation. 
This is enough to show that 
$\mu_{[\tau]} = \nu$ almost surely, i.e. any random limit point of 
$(\mu_{n,m})_{n \geq 1, m = \lfloor n \tau \rfloor}$ is almost surely equal to $\mu_{[\tau]}$. 
On the event $\mathcal{E}$ where $(\mu_{n,m})_{n \geq 1, m = \lfloor n \tau \rfloor}$ does not converge to $\mu_{[\tau]}$, one can extract a random subsequence of $(\mu_{n,m})_{n \geq 1, m = \lfloor n \tau \rfloor}$ such that the Prokhorov distance to $\mu_{[\tau]}$ is bounded from below, 
and then a further random subsequence which converges, the limit $\nu$ being necessarily different from $\mu_{[\tau]}$. Since we have proven that $\nu = \mu_{[\tau]}$ almost surely, the event $\mathcal{E}$ necessarily has probability zero, i.e. $(\mu_{n,m})_{n \geq 1, m = \lfloor n\tau \rfloor}$ almost surely converges to $\mu_{[\tau]}$. 
\end{proof}

\end{document}